\documentclass[10pt,reqno]{amsart}
\usepackage{amssymb,mathrsfs,color}
\usepackage{pinlabel}

% ----------------------------------------------------------------
% AMS-LaTeX Paper - modified by S.-J. Oh
% **** -----------------------------------------------------------
\usepackage{enumerate}

\usepackage{graphicx}
\usepackage{xcolor} % A package to add color.
\usepackage{tensor}
\usepackage{slashed}
\usepackage{cite}
% COLORS ------------------------------------------------------------
\definecolor{green}{rgb}{0,0.8,0} % Redefines the color green.
%%%% Annotations %%%%
 % Defines the command "\texthl{<text>}" to be the text of its argument highlighted in yellow.
 % Defines the command "\displayhl{<displayed math>}" to be the displayed mathematics of its argument highlighted in yellow.
 % Defines the command "\scripthl{<superscript or subscript>}" to be the superscript or subscript of its argument highlighted in yellow.
 % Defines the command "\comment{}" to be its argument written in red font.

% THEOREMS -------------------------------------------------------
%\newtheorem{theorem}{Theorem}[section]
%\newtheorem{corollary}[theorem]{Corollary}
%\newtheorem{lemma}[theorem]{Lemma}
%\newtheorem{proposition}[theorem]{Proposition}
%\theoremstyle{definition}
%\newtheorem{definition}[theorem]{Definition}
%\newtheorem{example}[theorem]{Example}
%\theoremstyle{remark}
%\newtheorem{remark}[theorem]{Remark}
%\numberwithin{equation}{section}
%\numberwithin{equation}{subsection}

% MATH -----------------------------------------------------------

%\newcommand{\abs}[1]{\vert#1\vert}
\newcommand{\brk}[1]{\langle#1\rangle}
\newcommand{\set}[1]{\{#1\}}

\newcommand{\tr}{\textrm{tr}}

\newcommand{\ud}{\mathrm{d}}
\newcommand{\rd}{\partial}

\newcommand{\bb}{\Big}

%Greek Characters

\newcommand{\eps}{\epsilon}

\newcommand{\lmb}{\lambda}

\newcommand{\omg}{\omega}

%Bold Characters

\newcommand{\bfh}{{\bf h}}

%Blackboard Bold Characters

\newcommand{\bbC}{\mathbb C}

\newcommand{\bbR}{\mathbb R}
\newcommand{\bbS}{\mathbb S}

%MathCal Characters

\newcommand{\calE}{\mathcal E}

\newcommand{\calL}{\mathcal L}

\newcommand{\calN}{\mathcal N}

%\newcommand{\BX}{\mathbf{B}(X)}
%\newcommand{\A}{\mathcal{A}}

% ----------------------------------------------------------------

\vfuzz2pt % Don't report over-full v-boxes if over-edge is small
\hfuzz2pt % Don't report over-full h-boxes if over-edge is small

%-----------------------------------------------------------------

%Andy's preamble

%Pictures
\usepackage{wrapfig}
\usepackage{tikz}
\usetikzlibrary{arrows,calc,decorations.pathreplacing}
\definecolor{light-gray1}{gray}{0.90}
\definecolor{light-gray2}{gray}{0.80}

%\parindent=0pt
%\allowdisplaybreaks

%colors
\definecolor{deepgreen}{cmyk}{1,0,1,0.5}

%greenoff
%\renewcommand{\Green}[1]{#1}

%mathcal letters

%\newcommand{\calD}{\mathcal{D}}
\newcommand{\E}{\mathcal{E}}

\newcommand{\LL}{\mathcal{L}}
\newcommand{\NN}{\mathcal{N}}

%\mathscri letters

%mathbb letters
\newcommand{\C}{\mathbb{C}}

\newcommand{\N}{\mathbb{N}}
\newcommand{\R}{\mathbb{R}}
\newcommand{\Sp}{\mathbb{S}}
\newcommand{\Z}{\mathbb{Z}}

%bold lowercase

\newcommand{\h}{\mathbf{h}}

\newcommand{\m}{\mathbf{m}}

%bold uppercase

%greek lowercase
\newcommand{\al}{\alpha}
\newcommand{\be}{\beta}

\newcommand{\om}{\omega}
\newcommand{\la}{\lambda}

\newcommand{\s}{\sigma}

%greek uppercase

%misc symbols
\newcommand{\p}{\partial}
\newcommand{\na}{\nabla}

\newcommand{\loc}{\operatorname{loc}}

\makeatletter

\newcommand{\Rmnum}[1]{\expandafter\@slowromancap\romannumeral #1@}
\makeatother

\newcommand{\ti}{\widetilde}

%bracketing

\newcommand{\ang}[1]{\left\langle{#1}\right\rangle}
\newcommand{\abs}[1]{\left\lvert{#1}\right\rvert}

 %math environment shortcuts

\newcommand{\EQ}[1]{\begin{equation}\begin{split} #1 \end{split}\end{equation}}

\setlength{\marginparwidth}{2cm}

\newcommand{\Del}[1]{}

\numberwithin{equation}{section}

\newtheorem{thm}{Theorem}[section]
\newtheorem{cor}[thm]{Corollary}
\newtheorem{lem}[thm]{Lemma}
\newtheorem{prop}[thm]{Proposition}

\newtheorem{conj}{Conjecture}

\theoremstyle{remark}
\newtheorem{rem}{Remark}

%\renewcommand{\theenumi}{\roman{enumi}}

%text shortcuts

\newcommand{\mand}{{\ \ \text{and} \ \  }}

\newcommand{\mfor}{{\ \ \text{for} \ \ }}
\newcommand{\mas}{{\ \ \text{as} \ \ }}

\newcommand{\rst}{\!\upharpoonright}	

\begin{document}

\title[A refined threshold theorem for  wave maps into surfaces]{A refined threshold theorem for  $(1+2)$-dimensional wave maps into surfaces}
\author{Andrew Lawrie}
\author{Sung-Jin Oh}

\begin{abstract}
The recently established threshold theorem for energy critical wave maps states that wave maps with energy less than that of the ground state (i.e., a minimal energy nontrivial harmonic map) are globally regular and scatter on $\bbR^{1+2}$. 
In this note we give a refinement of this theorem when the target is a closed orientable surface by taking into account an additional  invariant of the problem, namely the topological degree. We show that the sharp energy threshold for global regularity and scattering is in fact \emph{twice} the energy of the ground state for wave maps with degree zero, whereas wave maps with nonzero degree necessarily have at least the energy of the ground state.
We also give a discussion on the formulation of a refined threshold conjecture for the energy critical $SU(2)$ Yang-Mills equation on $\bbR^{1+4}$.
\end{abstract}

\thanks{The first author is an NSF Postdoctoral Fellow and support of the National Science Foundation,  DMS-1302782, is acknowledged. The second author is a Miller Research Fellow, and acknowledges support from the Miller Institute. The authors also thank Sohrab Shahshahani for helpful comments on the preliminary draft of this note.}

\maketitle

\section{Introduction}

%In this  note we study the asymptotic behavior of 
%The subject of this note is the energy critical wave maps $$\Phi: (\R^{1+2}, \m) \to (\NN, \h),$$ where  $\m$ is the Minkowski metric and $(\NN, \h)$ is a closed orientable surface  equipped with a Riemannian metric $\h$. In the physics literature such wave maps are examples of what are called nonlinear $\s$-models, and particular importance is granted to the case where the target manifold admits nontrivial finite energy static wave maps, i.e.,  harmonic maps, as these give basic examples of  topological (albeit unstable) solitons.  %One may view $\NN$ as an isometrically embedded subma
%A wave map is a formal critical point of the action 

The subject of this note is energy critical wave maps $$\Phi: (\R^{1+2}, \m) \to (\NN, \h),$$ where  $\m$ is the Minkowski metric and $(\NN, \h)$ is a Riemannian manifold with a metric $\h$. These maps arise in the physical literature as examples of nonlinear $\s$-models. Particular importance is granted to the case where the target manifold admits nontrivial finite energy static wave maps, i.e.,  harmonic maps, as these give basic examples of  topological (albeit unstable) solitons.  %One may view $\NN$ as an isometrically embedded subma
%A wave map is a formal critical point of the action 

Spectacular progress has been made in  recent years on the global asymptotic behavior of large energy wave maps on $\bbR^{1+2}$, culminating in the following remarkable \emph{threshold theorem} \cite{ST1, ST2, KS, Tao37}: Every wave map with energy less than that of the first nontrivial harmonic map is globally regular on $\bbR^{1+2}$ and scatters (see Theorem~\ref{t:st} for a more precise statement). In this note we refine the threshold theorem by taking into account an additional invariant of a wave map, namely its topological degree, which is available in the case when $\NN$ is a surface. A simple version reads as follows (see Theorem~\ref{t:main} for the precise formulation):
\begin{thm}[Main theorem, simple version] \label{t:main-simple}
Let $\NN$ be an orientable closed surface. Consider a smooth wave map $\Phi$ on a subset $I \times \bbR^{2}$ of $\bbR^{1+2}$ with topological degree zero. If the energy of $\Phi$ is less than \emph{twice} that of the energy of the first nontrivial harmonic map $Q : \bbR^{2} \to \NN$, then $\Phi$ is globally regular (i.e., $\Phi$ extends to $\bbR^{1+2}$ as a smooth wave map) and scatters.
\end{thm}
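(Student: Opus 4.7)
The proof would proceed by contradiction, combining the bubbling analysis of Sterbenz--Tataru (and Krieger--Schlag/Tao) with the conservation of topological degree. Suppose $\Phi$ has degree zero, satisfies $E(\Phi) < 2 E(Q)$, and fails to extend globally or to scatter on $\bbR^{1+2}$; by Theorem~\ref{t:st} we must already have $E(\Phi) \geq E(Q)$.

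The first step is to extract a harmonic map bubble. The refinement of the threshold theorem contained in the proofs of Sterbenz--Tataru and Krieger--Schlag yields sequences $t_{n}$, $x_{n}\in\bbR^{2}$, and $\lambda_{n}>0$ such that the rescaled maps $\Phi_{n}(y):=\Phi(t_{n},x_{n}+\lambda_{n}y)$ converge, after extraction, weakly in $\dot H^{1}_{\mathrm{loc}}$ (and in a stronger sense away from concentration points) to a nontrivial finite-energy harmonic map $Q_{\infty}:\bbR^{2}\to\NN$. By the Sacks--Uhlenbeck removable singularity theorem, $Q_{\infty}$ extends to a smooth harmonic map $S^{2}\to\NN$, and since $\NN$ is a closed orientable surface, any such nonconstant harmonic map satisfies $\deg(Q_{\infty})\neq 0$: when $\NN=S^{2}$ this is the Eells--Wood holomorphic classification, while for $\mathrm{genus}(\NN)\geq 1$ no nonconstant harmonic $S^{2}\to\NN$ exists at all, so in that case Theorem~\ref{t:main-simple} reduces immediately to Theorem~\ref{t:st}.

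The second step is a single-profile decomposition at time $t_{n}$,
\ant{
\Phi(t_{n},\cdot)\;=\;Q_{\infty}\!\bigl(\lambda_{n}^{-1}(\cdot-x_{n})\bigr)\;+\;\Psi_{n},
}
cut off near the concentration point and near spatial infinity so that each summand has a well-defined topological degree in $\bbZ$. Weak convergence $\Phi_{n}\rightharpoonup Q_{\infty}$ together with energy conservation gives the asymptotic Pythagorean identity $E(\Phi)=E(Q_{\infty})+E(\Psi_{n})+o(1)$, and continuity of the degree along the decomposition produces the additivity $\deg(\Phi)=\deg(Q_{\infty})+\deg(\Psi_{n})$ for large $n$. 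Since $\deg(\Phi)=0$ and $\deg(Q_{\infty})\neq 0$, we conclude $\deg(\Psi_{n})\neq 0$. Invoking the topological lower bound --- any finite-energy map $\bbR^{2}\to\NN$ of nonzero degree has Dirichlet energy at least $E(Q)$, with equality forcing a holomorphic representative of the ground state --- we obtain $E(\Psi_{n})\geq E(Q)$. Combined with $E(Q_{\infty})\geq E(Q)$ this yields $E(\Phi)\geq 2E(Q)$, contradicting the hypothesis.

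The principal technical obstacle I anticipate is the rigorous formulation of this profile decomposition together with the tracking of degree. The bubble extraction of Sterbenz--Tataru/Krieger--Schlag naturally produces weak $\dot H^{1}$ convergence, but the topological degree is a priori discontinuous in the weak topology; one has to upgrade the convergence enough near the concentration scale, and simultaneously pin down the behavior of $\Phi(t_{n},\cdot)$ at spatial infinity, in order to attach integers $\deg(Q_{\infty})$ and $\deg(\Psi_{n})$ with the correct additivity. The key work is to arrange a physical-space cutoff/extension procedure that localizes $Q_{\infty}$ and $\Psi_{n}$ so that the two degrees are well-defined and sum to $\deg(\Phi)$, while the Pythagorean energy splitting is preserved up to $o(1)$. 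Once this topological bookkeeping is in place, the remaining ingredients (removable singularities for harmonic $2$-spheres, the energy lower bound in a fixed homotopy class, and energy conservation) are classical.
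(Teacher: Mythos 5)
Your overall architecture---argue by contradiction, extract a harmonic-map bubble via Sterbenz--Tataru, note that by Eells--Wood the bubble has nonzero degree $k$ (the higher-genus case being vacuous), and then charge the rest of the map an extra $E(Q)$ for cancelling that degree---is exactly the paper's strategy, and your first two steps coincide with Theorem~\ref{t:st} and Corollary~\ref{cor:zero-genus}. Where you diverge is in how the factor of two is actually extracted, and this is the step you leave open. You propose a single-profile decomposition $\Phi(t_{n},\cdot)=Q_{\infty}(\lambda_{n}^{-1}(\cdot-x_{n}))+\Psi_{n}$ with a Pythagorean energy expansion and a degree additivity $\deg(\Phi)=\deg(Q_{\infty})+\deg(\Psi_{n})$; as you yourself note, attaching an integer degree to the cut-off remainder and proving its additivity under the convergence furnished by bubbling is the hard part, and as written this is a genuine gap rather than a routine verification (the remainder is not even a map into $\calN$ without a gluing procedure, and $H^{1}_{\mathrm{loc}}$ spacetime convergence does not control fixed time slices $t=t_{n}$; note also that the limit is a \emph{Lorentz transform} of a harmonic map, hence time-dependent). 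The paper's Lemma~\ref{lem:key} sidesteps all of this: it works with the signed integral of the pullback area form $\Phi_{(n)}^{\ast}\omg$. Strong $H^{1}_{\mathrm{loc}}$ convergence on the slab, after time-averaging over $[-1/2,1/2]$, gives $\int_{B_{R}}\Phi_{(n)}^{\ast}\omg\approx\int_{B_{R}}\Psi^{\ast}\omg\approx k\,A(\calN)$ for $R$ large; the exact constraint $\int_{\bbR^{2}}\Phi_{(n)}^{\ast}\omg=0$ from degree zero then forces $\int_{\bbR^{2}\setminus B_{R}}\Phi_{(n)}^{\ast}\omg\approx -k\,A(\calN)$; and the pointwise bound \eqref{eq:extr-vol-form-ed}, which dominates $\abs{\Phi^{\ast}\omg}$ by the energy density, converts the two contributions into $\calE[\Phi_{(n)},0]\geq 2\abs{k}A(\calN)-\eps$. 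No degree is ever assigned to a remainder, no cutoff or orthogonality is needed---only the integral formula for the degree and the triangle inequality. If you wish to complete your route you would need the full nonlinear profile decomposition with degree bookkeeping (essentially the Krieger--Schlag machinery), which is far more than the problem requires; I would replace your second step by the volume-form argument of Lemma~\ref{lem:key}.
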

By the (topological) degree of a wave map, we mean the degree of its restriction $\Phi \rst_{t = t_{0}} : \bbR^{2} \to \NN$ to any constant $t$-hypersurface.  The significance of Theorem~\ref{t:main-simple} lies in the fact that if the degree of $\Phi$ is nonzero, then its energy is automatically no less than that of the first nontrivial harmonic map $Q$; see \eqref{bf} and \eqref{gs}. Hence Theorem~\ref{t:main-simple} provides an improved energy threshold precisely in the case when the general threshold theorem applies as well. The energy threshold in Theorem~\ref{t:main-simple} is moreover sharp; see Remark~\ref{r:equiv}.

To proceed to a more detailed discussion of our result, we now present a proper definition of a wave map. One may formulate the wave maps problems extrinsically by viewing $(\NN, \h)$ as an isometrically embedded submanifold of a Euclidean space $(\R^N, \ang{ \cdot, \cdot}_{\R^N})$. In this case, a \emph{wave map} is defined as a  formal critical point of the action
%\EQ{
%\LL(\Phi) := \frac{1}{2} \int_ {\R^{1+2}} \ang{ d\Phi, d\Phi}_{ \m^{-1} \otimes \Phi^* \h} dt dx %T^*(\R^{1+2})\otimes \Phi^* T\NN} dt dx
%}
%where here the differential $d\Phi$ is viewed as a section of the vector bundle $T^*(\R^{1+2})\otimes \Phi^* T\NN$, endowed with the metric  $\m^{-1}  \otimes \h$. 
 \EQ{
\LL(\Phi)  =  \frac{1}{2} \int_{\R^{1+2}}  \m^{\al \be} \ang{ \p_\al \Phi , \,  \p_\be \Phi }_{\R^N} \,  \ud x\, \ud t.
}
The Euler-Lagrange equations are given by 
\EQ{
\Box \Phi \perp T_{\Phi} \NN
}
which can be rewritten as 
\EQ{ \label{ewm}
\Box \Phi = \mathcal{A}(\Phi)(\p \Phi, \p \Phi)
}
where  $\mathcal{A}$ denotes the second fundamental form of the embedding $$(\NN, \h) \hookrightarrow (\R^N, \ang{ \cdot, \cdot}_{\R^N}).$$
%\EQ{ \label{ewm} 
%\Box \Phi =  -\Phi    \left( \m^{\al \be} \p_\al \Phi \cdot  \p_\be \Phi \right) =  \Phi \left(  \abs{\p_t \Phi}^2 - \abs{ \na_x \Phi}^2 \right)
%}
The conserved energy is given by 
\EQ{ \label{een} 
\E[\Phi, \p_t \Phi](t)  =  \frac{1}{2} \int_{\R^2}  \abs{\p_t \Phi(t)}^2  +  \abs{\na \Phi(t)}^2 \, \ud x  = \textrm{constant}.
}
We will study the Cauchy problem for~\eqref{ewm},  for smooth finite energy initial data $\vec \Phi(0) = (\Phi_0, \Phi_1)$ where 
\EQ{ \label{data} 
\Phi_0(x) \in \NN   \subset \R^N , \quad \Phi_1(x)  \in  T_{\Phi_0(x)}\NN, \quad  \forall  \, \, x \in \R^2.
} 
We remark that if the initial data is smooth with  finite energy, then we can find a fixed vector $\Phi_\infty \in \NN$ so that 
\EQ{\label{xlimit}
\Phi_0(x) \to \Phi_\infty \mas \abs{x} \to \infty. 
}
We also note that wave maps on $\R^{1+2}$ are called ~\emph{energy critical}, since the conserved energy and the equation are invariant under the same scaling, i.e., if $\Phi(t)$ solves~\eqref{ewm} then so does 
\EQ{
(\Phi_{\la}(t, x), \p_t \Phi_{\la}(t, x)) : = (\Phi(\la t, \la x), \, \la \p_t \Phi( \la t, \la x))
}
and we also have $\E[\vec \Phi_\la] = \E[\vec\Phi]$.

The geometry of the target manifold --  in particular the presence  of nontrivial finite energy harmonic maps $\Psi: \R^2 \to \NN$ --  plays a crucial role in the asymptotic dynamics of~\emph{large energy} solutions to the energy critical wave maps equation.  
We state the bubbling result of Sterbenz and Tataru ~\cite{ST1, ST2} below, which forms the foundation for the observations and arguments presented in this paper. 
%since this will form the foundation for the observations and arguments presented in this paper. 
\begin{thm}[Struwe, Sterbenz-Tataru bubbling] {\rm \cite[Theorem $1.3$ and Theorem $1.5$]{ST2}} \label{t:st} Let $\NN$ be an isometrically embedded submanifold of $\R^N$. Suppose that $\Phi(t)$ is a smooth finite energy wave map with maximal forward time of  existence $T_+$. Then either
 $\Phi(t)$ is globally regular and  scatters as $t \to \infty$, or $\Phi(t)$ bubbles  a nontrivial harmonic map at $T_+$ in the following sense: 
 \begin{itemize} 
\item If $T_+< \infty$,  there exists a sequence of times $t_n \to T_+$, a sequence of scales $\la_n = o(T_+ - t_n)$, and a sequence of translations $x_n \in \R^2$ such that 
\EQ{
\Phi_n(t, x) : =  \Phi(t_n + \la_n t, x_n+ \la_n x) 
} 
converges strongly to a Lorentz transform of a nontrivial finite energy harmonic map  $\Psi: \R^2 \to \NN$ % \\ $\Psi_\ell (t, x) := \Psi \circ L_\ell (t, x)$
 in the space $H^1_{\loc}( (-1, 1) \times \R^2; \R^N)$. 
\item  If $T_{+} = \infty$, then there exists a sequence of times $t_n \to \infty$, a sequence of scales $\la_n = o( t_n)$, and a sequence of translations $x_n \in \R^2$ such that 
\EQ{
\Phi_n(t, x) : =  \Phi(t_n + \la_n t, x_n+ \la_n x) 
} 
converges strongly to a Lorentz transform of a nontrivial finite energy harmonic map $\Psi: \R^2 \to \NN$  %\\$\Psi_\ell (t, x) := \Psi \circ L_\ell (t, x)$ 
in the space $H^1_{\loc}( (-1, 1) \times \R^2; \R^N)$. 
\end{itemize} 
\end{thm}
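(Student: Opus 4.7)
The plan is to argue by contrapositive: assuming $\Phi$ neither scatters as $t \to T_+$ nor extends smoothly past $T_+$, I extract from $\Phi$ a sequence of rescalings converging strongly in $H^1_{\loc}((-1,1) \times \R^2; \R^N)$ to a Lorentz-transform of a nontrivial harmonic map $\Psi$. The argument splits into (i) a geometric/Morawetz step that locates a concentration point $(t_n, x_n)$ and scale $\la_n$ at which a definite amount of energy persists in the rescaled picture, and (ii) a compactness step that passes to the limit along $\Phi_n(t,x) := \Phi(t_n + \la_n t, x_n + \la_n x)$ and identifies the limit.

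For step (i), I localize to a backward truncated light cone from the putative singular vertex in the finite-time case, or to a suitable dilating forward cone in the global case. The core identity is the monotonicity of the flux of the wave-map stress--energy tensor contracted with a conformal vector field of the form $X = (t-T_+)\partial_t + x^i \partial_i$. Finiteness of the flux controls $\big|(\partial_t + \tfrac{x^i}{|x|}\partial_i)\Phi\big|^2$ in an $L^2_{t,x}$ sense on the truncated cone, so this ``outgoing null derivative'' of $\Phi$ is quantitatively small on macroscopic sub-regions. Combined with the failure of scattering (which, by small-energy wave-map theory, precludes the energy density from becoming too dispersed), this produces sequences $(t_n, x_n, \la_n)$ along which the rescaled maps $\Phi_n$ retain a positive amount of energy on compact sets of $(-1,1) \times \R^2$ while being approximately stationary in some (possibly boosted) timelike direction.

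For step (ii), I apply a Bahouri--G\'erard-type nonlinear profile decomposition adapted to the wave maps equation. Passing to a subsequence, $\Phi_n$ decomposes into finitely many concentrating profiles (each itself a wave map, possibly in a boosted frame) plus a dispersive remainder; step (i) guarantees that at least one profile survives. The near-stationarity extracted from the Morawetz identity forces the surviving profile to be independent of time in some Lorentz frame, so in that frame the wave map equation reduces to the harmonic map equation $\Delta \Psi \perp T_\Psi \NN$ on $\R^2$. Standard elliptic regularity yields smoothness of $\Psi$, and the lower bound on concentrated energy from step (i) gives nontriviality.

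The principal obstacle, and the main content of Sterbenz--Tataru beyond Struwe's earlier equivariant argument, is upgrading the weak convergence supplied by the profile decomposition to the strong $H^1_{\loc}$ convergence asserted in the theorem. This requires (a) an exact energy accounting across profiles, via an orthogonality/Pythagorean identity, in order to exclude residual oscillation in the limit; and (b) a delicate choice of the parameters $(t_n, x_n, \la_n)$ so that the correct Lorentz frame is recovered rather than lost. One cannot in general arrange the bubble to be asymptotically stationary on flat time slices, so the monotonicity must be implemented on appropriately tilted cones and the translations $x_n$ chosen to track the center of the boosted bubble --- this is where the generality of the target $\NN$, beyond the equivariant setting, creates the sharpest technical difficulty.
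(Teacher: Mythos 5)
The paper does not prove Theorem~\ref{t:st}: it is imported verbatim from Sterbenz--Tataru \cite[Theorems 1.3 and 1.5]{ST2} and used as a black box (the only results the paper actually proves are Lemma~\ref{lem:key} and Theorem~\ref{t:main}, which take Theorem~\ref{t:st} as input). So there is no internal proof to compare yours against; the relevant comparison is with the two-paper argument of \cite{ST1, ST2}.

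Judged on its own terms, your proposal is a reasonable roadmap of that argument but not a proof, and the gaps are located exactly at the points that constitute the actual content of \cite{ST1, ST2}. In step (i), the parenthetical ``failure of scattering (which, by small-energy wave-map theory, precludes the energy density from becoming too dispersed)'' quietly invokes the main theorem of \cite{ST1} --- that \emph{energy-dispersed} wave maps of arbitrarily large energy are regular and scatter --- which is not small-energy theory but a full-length gauge-theoretic renormalization argument; without it the Morawetz/monotonicity step produces no concentration point. In step (ii), a Bahouri--G\'erard-type \emph{nonlinear} profile decomposition for large-data wave maps into a general target is not an available off-the-shelf tool (Krieger--Schlag \cite{KS} construct one only for the hyperbolic plane); what \cite{ST2} actually does is a direct compactness argument on time-like-separated sections of the truncated cone, using the Morawetz control of $X\Phi$ together with local energy monotonicity and an elliptic (div-curl) argument to get strong $H^1_{\loc}$ convergence to a stationary-in-some-frame, hence harmonic, limit. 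Finally, you correctly identify the exclusion of null concentration and the upgrade from weak to strong convergence as the principal obstacles, but naming an obstacle is not the same as overcoming it: as written, the nontriviality of the limit and the strong $H^1_{\loc}$ convergence are both asserted rather than derived. If your goal was merely to sketch why the theorem is plausible, the sketch is broadly faithful in spirit; as a proof it is incomplete at every load-bearing step.
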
 

The above result gave a resolution what was referred to as the \emph{threshold conjecture} for  $2d$ finite energy  wave maps, which we state below as a corollary.   
\begin{cor}[Threshold Theorem]{\rm \cite[Corollary $1.6$]{ST2}}\label{c:eq}
Suppose that there exists a lowest energy nontrivial harmonic map $Q: \R^2  \to \NN$. Then for smooth initial data $\vec \Phi(0) = (\Phi_0, \Phi_1)$ with 
\EQ{ \label{leq} 
\E[ \Phi_0, \Phi_1] < \E[Q, 0], 
}
the corresponding wave map evolution $\Phi(t)$ is globally regular and scatters. 
\end{cor}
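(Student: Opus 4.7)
The plan is to argue by contradiction from the Sterbenz--Tataru bubbling dichotomy (Theorem~\ref{t:st}). Assume that $\vec{\Phi}(0)$ satisfies \eqref{leq} yet the corresponding wave map $\Phi(t)$ fails to be globally regular and scattering on its maximal interval. Then Theorem~\ref{t:st} applies in one of its two cases and produces sequences of times $t_{n} \to T_{+}$, scales $\lmb_{n}$, and translations $x_{n} \in \bbR^{2}$ such that the rescaled-translated maps
\[
\Phi_{n}(t, x) := \Phi(t_{n} + \lmb_{n} t,\, x_{n} + \lmb_{n} x)
\]
converge strongly in $H^{1}_{\loc}((-1, 1) \times \bbR^{2}; \bbR^{N})$ to a Lorentz transform $\widetilde{\Psi}$ of a nontrivial finite-energy harmonic map $\Psi : \bbR^{2} \to \NN$. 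My goal is to conclude $\E[\vec{\Phi}(0)] \ge \E[Q, 0]$, contradicting \eqref{leq}.

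The first step is to convert the local strong convergence into a global energy comparison between $\vec{\Phi}(0)$ and $\vec{\widetilde{\Psi}}(0)$. Since $\Phi_{n}$ is obtained from $\Phi$ by the symmetries preserving $\E$ (time and spatial translations and the critical rescaling), conservation of energy gives $\E[\vec{\Phi}_{n}(t)] = \E[\vec{\Phi}(0)]$ for every $t$ and $n$. For any $R > 0$ and $0 < \eps < 1$, strong $H^{1}_{\loc}$ convergence on the slab $(-1+\eps, 1-\eps) \times B_{R}$ yields, upon passing to the limit,
\[
\int_{-1+\eps}^{1-\eps} \int_{\abs{x} \le R} \abs{\rd_{t} \widetilde{\Psi}}^{2} + \abs{\nb \widetilde{\Psi}}^{2} \, \ud x \, \ud t \;\le\; 4(1-\eps)\,\E[\vec{\Phi}(0)].
\]
Since $\widetilde{\Psi}$ is itself a smooth finite-energy wave map, its energy is independent of $t$; letting $R \to \infty$ via monotone convergence and then $\eps \to 0$ gives $\E[\vec{\widetilde{\Psi}}(0)] \le \E[\vec{\Phi}(0)]$.

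The second step is the (well-known) fact that the energy of a Lorentz boost of a static harmonic map from $\bbR^{2}$ is no less than the original energy: this rests on the vanishing of the Hopf differential of any finite-energy harmonic map $\Psi : \bbR^{2} \to \NN$, which gives the pointwise weak-conformality identities $\abs{\rd_{1} \Psi}^{2} = \abs{\rd_{2} \Psi}^{2}$ and $\rd_{1} \Psi \cdot \rd_{2} \Psi = 0$, after which the inequality becomes a direct change-of-variables computation in the boost parameter. Thus $\E[\vec{\widetilde{\Psi}}(0)] \ge \E[\Psi, 0] \ge \E[Q, 0]$, where the second inequality is the hypothesis that $Q$ is a minimal-energy nontrivial harmonic map. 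Combining with the previous step yields $\E[\vec{\Phi}(0)] \ge \E[Q, 0]$, contradicting \eqref{leq}. All of the hard analysis is already packaged in Theorem~\ref{t:st}; the remaining ingredients are merely conservation and scaling of $\E$, the lower semicontinuity inherent in local strong convergence, and the boost energy inequality, so I do not foresee a serious obstacle in this deduction.
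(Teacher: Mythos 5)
Your argument is correct and is essentially the standard deduction: the paper does not prove Corollary~\ref{c:eq} itself but imports it from \cite{ST2}, and the route you take --- contradiction via Theorem~\ref{t:st}, lower semicontinuity of the energy under the $H^{1}_{\loc}$ convergence combined with energy conservation, and the fact that a Lorentz boost does not decrease the energy of a (weakly conformal, by vanishing of the Hopf differential) finite-energy harmonic map --- is precisely how the corollary follows there, and it also parallels the paper's own proof of the refined Theorem~\ref{t:main}. No gaps to report.
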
 

\begin{rem}\label{r:scat}
%What does scattering mean here. Mention the space $S$. 
%We refer the reader to~\cite{ST1, ST2} for a precise definition of scattering in this context. Roughly speaking, there is a space-time norm $S$, which measures the dispersive properties of the wave map $\Phi$ and $\Phi(t)$ scatters if 
Here scattering is  meant in the sense of~\cite{ST1, ST2}. Roughly speaking there is a space-time norm $S$ defined in~\cite{ST1}, which measures dispersive properties of wave maps.  One says that a wave map $\Phi$ scatters if the $S$-norm of $\Phi$ is finite. We refer the reader to~\cite[Proposition~$3.9$]{ST1} for a characterization of wave maps with finite $S$-norm. 
%Scattering means that a certain space-time norm $S$, which measures dispersive properties of the wave  of $\Phi$ is bounded. 
%Here scattering can be interpreted roughly as follows:  After fixing a suitable gauge, the derivative components of $\Phi$ asymptotically approach linear waves in $L^2$; see~\cite{ST1, ST2, KS}.  
\end{rem} 

\begin{rem} \label{r:T-KS}
Independent proofs of global regularity and scattering have been given by Krieger and Schlag~\cite{KS} when $\NN$ is the hyperbolic plane, and by Tao \cite{Tao37} when $\NN$ is a hyperbolic space. In both cases, note that $\NN$ is a noncompact manifold that does not admit any nontrivial harmonic maps.
 In addition, these works established uniform bounds on the scattering norm of a solution in terms of the energy and Krieger and Schlag~\cite{KS} also developed  a %covariant version of a Bahouri-G\'erard 
nonlinear profile decomposition for sequences of wave maps with bounded energy. 

%who also established a uniform bound on the scattering norm in terms of the energy,  

%Independent proofs of global regularity and scattering have been given by Krieger and Schlag~\cite{KS} when $\NN$ is the hyperbolic plane, who also established a uniform bound on the scattering norm in terms of the energy, and by Tao \cite{Tao37} when $\NN$ is a hyperbolic space. In both cases, note that $\NN$ is a noncompact manifold that does not admit any nontrivial harmonic maps.
\end{rem}
In this note we give a refinement of  Corollary~\ref{c:eq}  in the case that the target manifold  $\NN = \NN^2$ is a closed orientable surface, by taking into account  three fundamental aspects of such maps:
\begin{enumerate}
\item There is a topological invariant of a smooth finite energy wave map,  namely its \emph{topological degree}.
\item One can obtain a lower bound on the energy of a wave map in terms of its degree. %via  the Bogomol'nyi factorization of the potential energy.  
\item The finite energy harmonic maps $\Psi: \R^2 \to \NN$ are completely classified and have minimal energy in their respective degree classes. 
\end{enumerate} 

To arrive at our refinement of Corollary~\ref{c:eq}, we first give a more precise account of $(1) - (3)$ and their implications for the sub-threshold results in this setting. 

To formulate the notion of degree, we note that given smooth finite energy initial data $(\Phi_0, \Phi_1)$, we can use~\eqref{xlimit} to identify $\Phi_0$ with a map $\ti \Phi_0: \Sp^2 \to \NN$ by assigning the point at $\infty$ to the fixed vector $\Phi_\infty:= \lim_{\abs{x} \to \infty} \Phi(x)$. %under the one-point compactification of the plane $\R^2  \cup \{\infty\}$.  
We  then define the degree of the map $\Phi_0$ by
\EQ{
\deg(\Phi_0) := \deg( \ti \Phi_0) =  \frac{1}{ A(\NN)} \int_{\Sp^2}  \ti \Phi^*( \om) =  \frac{1}{ A(\NN)} \int_{\R^2}   \Phi^*( \om)\in \Z 
}
where $\om$ is the area element  of $\NN \subset \R^N$ and $A(\NN)$ is the area of $\NN$. 
 
We remark   that  for smooth initial data $(\Phi_0, \Phi_1)$,  $\deg(\Phi_0)$ is preserved by the smooth wave map evolution  on its maximal interval of existence $I_{\max}$, %when defined, %on which the solution depends continuously on the initial data,
 i.e., 
\EQ{
\deg( \Phi_0) = \deg(\Phi(t)) \mfor t \in I_{\max}. 
} 
%In the extrinsic formation, we have, using~\eqref{def deg},   the following formula for the degree of $\Phi$, 
%\EQ{
%\deg( \Phi) = \frac{1}{4\pi} \int_{\R^2}  \Phi  \cdot ( \p_1 \Phi  \times \p_2 \Phi) \, dx
%}

With this notion of degree, we  obtain a lower bound on the potential energy $\E[\Phi_0, 0]$ of initial data $(\Phi_0, \Phi_1)$  with $\deg (\Phi_0)  =  k \in \Z$,  namely  
\EQ{ \label{bf} 
\E[\Phi_0, 0]    \ge A(\NN) \abs{\deg(\Phi_0)} = \abs{k} A(\NN)
} 
The inequality~\eqref{bf} is established using the point-wise estimate~\eqref{eq:extr-vol-form-ed}. 

Next,  we recall that finite energy harmonic maps  $\R^2 \to \NN$ have been completely classified. Indeed,  Eells and Wood~\cite{EW} showed that every  harmonic map $\R^2 \to \NN$  is either holomorphic or anti-holomorphic with respect to any complex structure on $\NN$  -- here we are identifying the finite energy harmonic maps $\R^2 \to \NN$ with harmonic maps $\Sp^2 \simeq \C_{\infty}  \to \NN$, by the removable singularity theorem \cite{SU}. It then follows from standard results in complex analysis, see for example~\cite{SchlagCA}, that there exist nontrivial (i.e., non-constant) finite energy harmonic maps $\R^2 \to \NN$ if and only if $\textrm{genus}(\NN) =0$. 

If the genus of $\NN$ is zero,  the finite energy harmonic maps $\Psi:  \R^2 \to \NN$ can be identified with the rational functions $\rho: \C_\infty \to \C_{\infty}$. In particular the degree of $\Psi$ is precisely the degree of the corresponding rational function $\rho$. Moreover, for any such harmonic map $\Psi$ one has equality in~\eqref{bf} and thus 
\EQ{
\E[\Psi, 0] = A(\NN) \abs{\deg(\Psi)}
}
See Theorem~\ref{thm:holo} and Corollary~\ref{cor:high-genus} and Corollary~\ref{cor:zero-genus} below for precise statements of the preceding results on harmonic maps. 
 
Now we are nearly ready to state the main result of this note. From our observations above we see that if $\NN$ is a closed oriented surface,  Corollary~\ref{c:eq} is only meaningful if the genus of $\NN$ is zero, since otherwise there are no nontrivial harmonic maps. In the genus zero case, the minimal energy nontrivial harmonic maps can be identified with the degree one rational functions, $\rho : \C_{\infty} \to \C_{\infty}$, i.e., the M\"obius transformations. From now on we will denote such a harmonic map by $Q$, and we will call any such $Q$ a \emph{ground state harmonic map}. We have 
\EQ{ \label{gs} 
\E[Q, 0] = A(\NN)
}
It follows from~\eqref{bf} and~\eqref{gs} that the hypothesis~\eqref{leq} in the Threshold Theorem~\ref{c:eq}~\emph{only applies to initial data} $(\Phi_0, \Phi_1)$ \emph{with} $\deg(\Phi_0) = 0$.   Our main observation is that in this case one can do better than~\eqref{leq}.

\begin{thm}[Refined Threshold Theorem] \label{t:main} 
Let $(\NN, \h)$ be an orientable closed surface with $\textrm{genus}(\NN) = 0$.  Suppose that $\vec \Phi(0)  = (\Phi_0, \Phi_1)$ is smooth finite energy initial data with 
\EQ{ \label{<2EQ}
\deg(\Phi_0) = 0 \mand \E[\Phi_0, \Phi_1] < 2 \E[Q, 0]
}
where $Q: \R^2 \to \NN$ is the ground state harmonic map. Then the wave map evolution  $\Phi(t)$ of the data $\vec \Phi(0)$  is globally regular, and scatters as $ t \to \pm \infty$. 
\end{thm}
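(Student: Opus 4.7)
The plan is to argue by contradiction using the Sterbenz--Tataru bubbling theorem (Theorem~\ref{t:st}) together with a careful accounting of how the topological degree is partitioned between the bubble and its complement. Suppose $\vec \Phi(0)$ satisfies~\eqref{<2EQ} but that the corresponding wave map evolution $\Phi$ is not globally regular, or is global but does not scatter. Applying Theorem~\ref{t:st} yields sequences $t_{n}$, $\la_{n}$, and $x_{n} \in \R^{2}$ such that the rescalings
\[
\Phi_{n}(t, x) := \Phi(t_{n} + \la_{n} t, x_{n} + \la_{n} x)
\]
converge strongly in $H^{1}_{\loc}((-1,1)\times \R^{2}; \R^{N})$ to a Lorentz transform $\Psi_{L}$ of a nontrivial finite energy harmonic map $\Psi : \R^{2} \to \NN$.

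By the Eells--Wood classification recalled above, $\Psi$ is either holomorphic or anti-holomorphic, so it has degree $k := \deg(\Psi) \in \Z$ with $|k| \geq 1$ and saturates~\eqref{bf}, i.e.\ $\E[\Psi, 0] = A(\NN)|k|$. If $\Psi_{L}$ corresponds to a Lorentz boost of velocity $v$, then a direct computation exploiting the conformal identity $|\p_{1}\Psi| = |\p_{2}\Psi|$ for (anti)holomorphic maps gives $\E[\Psi_{L}, 0] = \gamma \E[\Psi, 0]$ with $\gamma = (1-|v|^{2})^{-1/2} \geq 1$; in particular the bubble itself already carries energy at least $A(\NN)|k|$.

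The crux is then to split the total energy at time $t_{n}$ between the \emph{bubble region} $B(x_{n}, R\la_{n})$ and its complement, and to show that each piece contributes at least $A(\NN)|k|$ of energy in the limit as $n \to \infty$ followed by $R \to \infty$. Strong $H^{1}_{\loc}$ convergence on the slice $\{t = 0\}$ in rescaled coordinates gives, for each fixed $R$, that the localized energy of $\Phi_{n}(0,\cdot)$ on $B(0,R)$ converges to that of $\Psi_{L}(0, \cdot)$, and similarly for the pullback of the area form $\om$; the former tends to $\gamma \E[\Psi, 0]$ as $R \to \infty$, while the latter tends to $A(\NN) k$ (using an anisotropic dilation change of variable relating $\Psi_{L}(0, \cdot)$ to $\Psi$). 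Conservation of degree forces $\int_{\R^{2}} \Phi(t_{n})^{*} \om = 0$, so in the double limit
\[
\int_{\R^{2} \setminus B(x_{n}, R\la_{n})} \Phi(t_{n})^{*} \om \to -A(\NN) k.
\]
The pointwise estimate $|\Phi^{*}\om| \leq \tfrac{1}{2}|\nabla \Phi|^{2}\, dx$ referenced in~\eqref{eq:extr-vol-form-ed} then converts this into a matching lower bound for the energy on the complement, and adding the two contributions yields
\[
\E[\Phi_{0}, \Phi_{1}] \geq \gamma \E[\Psi, 0] + A(\NN)|k| \geq 2 A(\NN)|k| \geq 2 A(\NN) = 2\E[Q, 0],
\]
contradicting~\eqref{<2EQ}.

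The main obstacle I expect is upgrading the strong $H^{1}_{\loc}$ spacetime convergence provided by Theorem~\ref{t:st} to the slicewise convergence of the localized energy and of $\int \Phi^{*}\om$ at a fixed time, uniformly enough to make the double limit rigorous. This should follow by Fubini together with conservation of energy for both $\Phi_{n}$ and $\Psi_{L}$ (selecting a good time near $t = 0$ in the rescaled frame at which slicewise convergence holds, and transporting back via energy conservation), but it must be executed with care so that no energy escapes into the intermediate annulus $B(x_{n}, R\la_{n}) \setminus B(x_{n}, \la_{n})$ as $n \to \infty$ before $R \to \infty$.
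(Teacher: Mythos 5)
Your proposal is correct and follows essentially the same route as the paper's proof: contradiction via Theorem~\ref{t:st}, followed by the observation that degree conservation plus the pointwise bound \eqref{eq:extr-vol-form-ed} forces the region outside the bubble to carry at least as much energy as the bubble's area-form integral, yielding the factor of $2$. The only technical difference is that the paper circumvents the slicewise-convergence obstacle you flag by proving the key lower bound for the \emph{time-averaged} potential energy over $[-1/2,1/2]$ (Lemma~\ref{lem:key}), which suffices since the potential energy at each time is dominated by the conserved energy; your Fubini/good-time-selection repair would also work but is more delicate, and the boost factor $\gamma$ you compute for the bubble's energy is not needed.
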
 

The heuristic reason behind the threshold $2 \E[Q, 0]$ is as follows. The degree counts the number of times a map `wraps around' the target with orientation taken into account. Suppose that a harmonic map of degree $k$ bubbles off from a wave map $\Phi$. Then in order for $\Phi$ to have degree zero, it must `unwrap' precisely $\abs{k}$ times away from the bubble. The minimum energy cost for doing so is $\abs{k} A(\NN)$, which is also the energy of the degree $k$ harmonic map as discussed above. The case of $k = 1$ is the first nontrivial harmonic map (i.e., the ground state $Q$), and thus we are led to the threshold $2 \E[Q, 0]$.

\begin{rem} \label{r:equiv}
We remark that the analog of Theorem~\ref{t:main} for equivariant wave maps was proved without making use of~Theorem~\ref{t:st}  in~\cite[Theorem~$1.1$]{CKLS1}  via a version of the concentration compactness/ridigity method of Kenig and Merle~\cite{KM06, KM08} together  with Struwe's classic bubbling theorem~\cite{Struwe}. The point here is that the main result in~\cite{Struwe} is not as strong as the restriction of Theorem~\ref{t:st} to the equivairant setting since it comes without a scattering statement.  

We also note that Theorem~\ref{t:main} is sharp in the case of finite time blow-up. Indeed the explicit blow-up constructions of Krieger, Schlag, Tataru~\cite{KST}, Rodnianski, Sterbenz~\cite{RS}, and Rapha\"el, Rodnianski~\cite{RR} can be easily modified to produce degree zero blow-up solutions with energy slighty above $2\E[Q, 0]$; see~\cite[Section~$3.1$]{CKLS1}. 
\end{rem}

\subsection{Yang-Mills} 
It is natural to ask if one can formulate a version of this refined threshold conjecture in the setting of the Yang-Mills equation for $SU(2)$ vector bundles over $\R^{1+4}$, where the \emph{second Chern number} plays the role of the topological degree.  Of course, a result in the spirit of Theorem~\ref{t:st} is not yet known for Yang-Mills. However, even assuming that an analog of Theorem~\ref{t:st} does hold, there are additional difficulties one must take into account. For example, the  finite energy static solutions are not classified as in the case of harmonic maps between closed orientable surfaces. Moreover, it was shown in~\cite{SSU} that there are finite  energy stationary solutions -- even with vanishing second Chern number --  to the Yang-Mills equation which do not minimize the energy. For a discussion of how to properly formulate the threshold conjecture for Yang-Mills we refer the reader to Section~\ref{sec:ym}. 

%\subsection{Yang Mills} 
%It is natural to ask if one can formulate a version of this refined threshold conjecture in the setting of the Yang Mills equation for $SU(2)$ vector budleds over $\R^{1+4}$, where the \emph{second Chern number} of the initial data plays the role of the topological degree.  Of course, a result in the spirit of Theorem~\ref{t:st} is not yet known for Yang Mills. However, even assuming that an analog of Theorem~\ref{t:st} does holds, there are additional difficulties one must take into account. For example, the  finite energy static solutions are not classified as in the case of harmonic maps between closed orientable surfaces. Moreover, it was shown in~\cite{Uhl et al} that there are finite  energy stationary solutions -- even ones with vanishing second Chern number --  to the Yang Mills equation which do not minimize the energy. For a discussion of how to properly formulate the threshold conjecture for Yang Mills we refer the reader to Section~\ref{sec:ym}. 
 
\section{Proof of refined threshold results for wave maps} \label{sec:wm}
We identify $\bbR^{2}$ with $\bbC$ and $\bbS^{2}$ with the Riemann sphere $\bbC_{\infty}$. By the removable singularity theorem~\cite{SU}, a finite energy harmonic map from $\bbR^{2}$ extends to a harmonic map from $\bbC_{\infty}$. The following classification theorem is known for harmonic maps from $\bbC_{\infty}$.

\begin{thm} \label{thm:holo}
Every finite energy harmonic map from $\bbC^{\infty}$ to $\calN$ is holomorphic or anti-holomorphic (with respect to any complex structure on $\calN$).
\end{thm}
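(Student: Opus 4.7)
The plan is to run the classical Eells--Wood argument based on the Hopf differential, after reducing to the case $\textrm{genus}(\NN) = 0$.

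\emph{Reduction to genus zero.} If $\textrm{genus}(\NN) \geq 1$, the universal cover $\widetilde{\NN}$ is $\R^{2}$ or the hyperbolic plane $\bbH^{2}$, hence simply connected and non-positively curved. Since $\bbC_{\infty}$ is simply connected, $\Psi$ lifts to a harmonic map $\widetilde{\Psi} : \bbC_{\infty} \to \widetilde{\NN}$. The Bochner identity, combined with $\text{Ric}_{\bbC_{\infty}} > 0$ and the non-positivity of the sectional curvatures of $\widetilde{\NN}$, yields $\De \abs{d\widetilde\Psi}^{2} \geq c\, \abs{d\widetilde\Psi}^{2}$ with $c > 0$ wherever $d\widetilde\Psi \neq 0$, and the maximum principle on the compact surface $\bbC_{\infty}$ forces $d\widetilde\Psi \equiv 0$. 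Thus $\Psi$ is constant, which is trivially both holomorphic and anti-holomorphic, and it suffices to treat the case $\textrm{genus}(\NN) = 0$.

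\emph{Hopf differential.} Fix any complex structure on $\NN$, and work in local isothermal coordinates $w$ in which $\h = e^{2\sigma(w, \bar w)} \abs{dw}^{2}$. Let $z$ be a complex coordinate on $\bbC_{\infty}$ and write $\psi = w \circ \Psi$ for the local representative. In these coordinates the harmonic map equation reads
\[
    \psi_{z \bar z} + 2\sigma_{w}(\psi)\, \psi_{z}\, \psi_{\bar z} = 0.
\]
Complex-bilinearly extending $\h$, define the \emph{Hopf differential}
\[
    \phi(z)\, dz^{2} \, := \, \h_{\bbC}(\p_{z}\Psi, \p_{z}\Psi)\, dz^{2} \,=\, e^{2\sigma(\psi)}\, \psi_{z}\, \overline{\psi_{\bar z}}\, dz^{2}.
\]
A direct computation using the harmonic map equation and its conjugate gives $\p_{\bar z} \phi \equiv 0$, so $\phi\, dz^{2}$ is a globally defined holomorphic quadratic differential on the Riemann sphere. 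Since $H^{0}(\bbC_{\infty}, K^{\otimes 2}) = 0$ (the degree of $K^{\otimes 2}$ is $-4 < 0$), we conclude $\phi \equiv 0$. Equivalently, at every point of $\bbC_{\infty}$ either $\psi_{z} = 0$ or $\psi_{\bar z} = 0$.

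\emph{From local dichotomy to global conclusion.} Setting $g := \psi_{\bar z}$, the harmonic map equation yields the first order linear equation $\p_{z} g = -2\sigma_{w}(\psi)\, \psi_{z}\, g$. Taking conjugates casts this as a Cauchy--Riemann equation $\p_{\bar z}\bar g + B(z, \bar z)\bar g = 0$ with smooth coefficient $B$, so the similarity (Vekua) principle furnishes a local factorization $\bar g = e^{s(z, \bar z)}\, \widetilde h(\bar z)$ with $\widetilde h$ anti-holomorphic. In particular the zero set of $\psi_{\bar z}$ is either discrete or all of $\bbC_{\infty}$, and analogously for $\psi_{z}$. Since $\{\psi_{z} = 0\} \cup \{\psi_{\bar z} = 0\} = \bbC_{\infty}$ and the union of two discrete subsets cannot cover a surface, at least one of $\psi_{z}, \psi_{\bar z}$ vanishes identically; hence $\Psi$ is holomorphic or anti-holomorphic.

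\emph{Main obstacle.} The most delicate step is the last one---upgrading the pointwise dichotomy to a global one---and the similarity principle is the key ingredient. A viable alternative is to invoke Aronszajn's unique continuation theorem applied directly to the first-order elliptic system satisfied by $(\psi_{z}, \psi_{\bar z})$, which rules out the scenario in which both quantities vanish on nontrivial open subsets without vanishing identically.
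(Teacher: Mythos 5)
Your core argument --- the Hopf differential $\phi\,dz^{2}$ is holomorphic by the harmonic map equation, vanishes because $\deg K_{\bbC_{\infty}}^{\otimes 2}=-4<0$, and the resulting pointwise dichotomy is upgraded to a global alternative via the similarity principle --- is precisely the classical Eells--Wood argument that the paper invokes by citation, and that part is sound (up to the harmless slip that the similarity principle for $\p_{\bar z}\bar g+B\bar g=0$ produces $\bar g=e^{s}h$ with $h$ \emph{holomorphic} in $z$; either way the zero set of $\psi_{\bar z}$ is discrete unless $\psi_{\bar z}\equiv 0$). However, your ``reduction to genus zero'' is both unnecessary and incorrect as written. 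A closed surface of genus $\geq 1$ equipped with an arbitrary Riemannian metric $\h$ need not be non-positively curved (a torus of revolution in $\R^{3}$ has regions of positive Gauss curvature), and harmonicity depends on the actual metric of the target, not merely on its conformal type; so the lift $\widetilde{\Psi}$ lands in $\R^{2}$ or $\bbH^{2}$ endowed with the pulled-back metric $\pi^{\ast}\h$, to which the Eells--Sampson Bochner/maximum-principle argument does not apply. Fortunately, the Hopf-differential argument you give afterwards uses only that the \emph{domain} is $\bbC_{\infty}$ and works verbatim for a target of any genus, so you should simply delete the reduction; the constancy of harmonic maps into higher-genus targets is then a consequence of Riemann--Hurwitz, exactly as in Corollary~\ref{cor:high-genus}.

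A second, smaller gap: the theorem concerns \emph{finite energy} harmonic maps, which a priori are only weak ($H^{1}$) solutions, whereas all of your computations (local representatives, $\psi_{z\bar z}$, the Hopf differential) presuppose smoothness. The paper's proof supplies this ingredient by first invoking H\'elein's regularity theorem for weakly harmonic maps from surfaces; you should do the same before running the classical argument.
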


\begin{proof} 
By H\'elein's regularity theorem \cite{Hel} such harmonic maps are smooth. Then the theorem follows from a classical argument of Eells-Wood \cite{EW}.
\end{proof}

\begin{cor} \label{cor:high-genus}
If the genus of $\calN$ is greater than $0$, then there does not exist a  non-constant finite energy harmonic map from $\bbS^{2}$ to $\calN$.
\end{cor}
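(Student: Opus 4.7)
The plan is to combine Theorem \ref{thm:holo} with the uniformization theorem and a Liouville-type argument. By Theorem \ref{thm:holo}, any finite energy harmonic map $\Psi : \bbS^2 \to \calN$ is either holomorphic or anti-holomorphic with respect to a fixed complex structure on $\calN$; since anti-holomorphy with respect to one complex structure is holomorphy with respect to the conjugate complex structure, and the genus is insensitive to this choice, it suffices to rule out non-constant holomorphic maps.

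Next I would invoke the uniformization theorem. Assume $\mathrm{genus}(\calN) \geq 1$. Then the universal Riemannian cover $p : \tld{\calN} \to \calN$, endowed with the pulled-back complex structure, is biholomorphic to either $\bbC$ (when $\mathrm{genus}(\calN)=1$) or the unit disk $\bbD$ (when $\mathrm{genus}(\calN)\geq 2$). Since $\bbS^2$ is simply connected, the holomorphic map $\Psi : \bbS^2 \to \calN$ lifts to a holomorphic map
\begin{equation*}
\tld{\Psi} : \bbS^2 \to \tld{\calN} \subset \bbC,
\end{equation*}
with $p \circ \tld{\Psi} = \Psi$. Because $\bbS^2$ is compact, $\tld{\Psi}(\bbS^2)$ is a compact, hence bounded, subset of $\bbC$. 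Viewing $\tld{\Psi}$ as a bounded holomorphic function on the compact Riemann surface $\bbS^2 \simeq \bbC_\infty$, the maximum principle (equivalently, Liouville's theorem after removing the point at infinity) forces $\tld{\Psi}$ to be constant, and hence $\Psi = p \circ \tld{\Psi}$ is constant as well.

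The only potentially subtle step is the lifting, which requires $\Psi$ to be continuous (automatic by the smoothness/regularity statement in the proof of Theorem \ref{thm:holo}, via H\'elein's theorem \cite{Hel}) and $\bbS^2$ to be simply connected (standard). Everything else is a straightforward appeal to classical complex analysis. As an alternative, one could argue via Riemann--Hurwitz: a non-constant holomorphic map of degree $d \geq 1$ between closed orientable surfaces satisfies $-2 = d(2\,\mathrm{genus}(\calN) - 2) + R$ with ramification contribution $R \geq 0$, which is incompatible with $\mathrm{genus}(\calN) \geq 1$. I would likely present the uniformization/Liouville version as it makes direct contact with the complex-analytic picture already invoked in the paper.
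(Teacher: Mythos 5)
Your proposal is correct, but your primary argument takes a genuinely different route from the paper's. The paper disposes of the corollary in one line by citing the Riemann--Hurwitz formula: since holomorphy depends only on the conformal structures, a non-constant holomorphic map $\bbC_{\infty} \to \calN$ of degree $d \geq 1$ would force $2 = d(2 - 2\,\mathrm{genus}(\calN)) - R$ with $R \geq 0$, impossible for positive genus --- exactly the alternative you sketch at the end. Your main argument instead uses the uniformization theorem to realize the universal cover $\tld{\calN}$ as $\bbC$ or $\bbD$, lifts $\Psi$ through the covering (valid since $\pi_1(\bbS^{2})$ is trivial and $\Psi$ is smooth, hence continuous, by H\'elein's theorem), and then kills the lift by compactness of $\bbS^{2}$ plus the maximum principle. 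Both are sound; the reduction of the anti-holomorphic case to the holomorphic one via the conjugate complex structure is also handled correctly. What each buys: the Riemann--Hurwitz route is a one-line citation but implicitly invokes the structure theory of non-constant holomorphic maps between compact Riemann surfaces (surjectivity, finite degree, ramification divisor); the uniformization route trades that for the (also heavy, but very classical) uniformization theorem together with only the elementary covering-space lifting criterion and Liouville/maximum principle, and arguably makes the obstruction more transparent --- the universal cover of a positive-genus surface is too small to receive a non-constant holomorphic image of a sphere. Either version would serve as a complete proof.
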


\begin{proof} 
The notion of a holomorphic (or an anti-holomorphic) map depends only on the conformal structure of both the domain and the target. Hence this statement is an easy consequence of the Riemann-Hurwitz formula from complex analysis, from which one deduces that there are no non-constant 
%simply a consequence of a basic fact in complex analysis that there do not exist any 
holomorphic maps from $\bbC_{\infty}$ to a genus $g > 0$ Riemann surface. See for example~\cite{SchlagCA}.  \qedhere 
\end{proof}

\begin{cor} \label{cor:zero-genus}
Consider a smooth orientable closed surface $\calN$ of genus $0$. Then the following statements hold.
\begin{enumerate}
\item  \label{part:hm} The finite energy harmonic maps from $\bbC_{\infty}$ to $\calN \simeq \C_{\infty}$ can be identified with  the rational functions $\bbC_{\infty} \to \bbC_{\infty}$, where we use the complex coordinate $w$ given by the stereographic projection on $\C_\infty$.
\item \label{part:zero-deg} Every degree $0$ harmonic map is constant. 
%The harmonic maps of degree $1$ coincide with M\"obius transformations
%\begin{equation*}
%	z \mapsto \frac{az + b}{cz + d}, \qquad a,b,c,d \in \bbR, \quad ad - bc = 1.
%\end{equation*}
\item \label{part:high-deg} The energy of a degree $k$ harmonic map equals $\abs{k} A(\calN)$. In particular, $\calE[Q, 0] = A(\calN)$.
\end{enumerate}
\end{cor}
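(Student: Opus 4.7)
The plan is to deduce all three parts from Theorem~\ref{thm:holo}, the uniformization theorem for genus-$0$ closed orientable surfaces, and the pointwise identity~\eqref{eq:extr-vol-form-ed} relating the energy density to the pullback of the area form (the same identity already used to prove~\eqref{bf}). For part~\eqref{part:hm}, I would first invoke uniformization to identify $\calN$, as a conformal (equivalently, complex) manifold, with $\bbC_{\infty}$ equipped with its standard complex structure, in such a way that the stereographic coordinate $w$ is a global holomorphic chart. Theorem~\ref{thm:holo} then guarantees that every finite energy harmonic map $\Psi: \bbC_{\infty} \to \calN$ is either holomorphic or anti-holomorphic. A holomorphic self-map of $\bbC_{\infty}$ is automatically a rational function in $w$, and the anti-holomorphic case is reduced to the holomorphic one by swapping the complex structure on $\calN$ (equivalently, composing with $w \mapsto \bar w$); this yields the identification with rational functions $\bbC_{\infty} \to \bbC_{\infty}$.

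Part~\eqref{part:zero-deg} is then immediate: for a rational map $\rho : \bbC_{\infty} \to \bbC_{\infty}$, the topological degree coincides with the algebraic degree $\deg \rho$ (the generic cardinality of the fiber $\rho^{-1}(p)$), so $\deg \rho = 0$ forces $\rho$ to be constant. For part~\eqref{part:high-deg}, the decisive input is the pointwise inequality
\[
\tfrac{1}{2}\bigl(|\partial_{1} \Phi|^{2} + |\partial_{2} \Phi|^{2}\bigr)\, dx^{1} \wedge dx^{2} \;\geq\; \bigl|\Phi^{*}\omega\bigr|,
\]
with equality at a point exactly when $d\Phi$ is conformal there. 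For a (anti-)holomorphic map $\Psi$, this equality holds pointwise with a fixed sign throughout $\bbR^{2}$, so integrating and applying the definition of degree gives
\[
\calE[\Psi, 0] \;=\; \Bigl| \int_{\bbR^{2}} \Psi^{*}\omega \Bigr| \;=\; |\deg \Psi|\, A(\calN) \;=\; |k|\, A(\calN),
\]
and specializing to $|k|=1$ yields the ground state identity $\calE[Q, 0] = A(\calN)$.

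I expect the only mildly delicate point to lie in part~\eqref{part:hm}, namely in ensuring that the \emph{holomorphic or anti-holomorphic} dichotomy of Theorem~\ref{thm:holo} really produces rational functions once a single complex structure on $\calN$ has been fixed; this is handled by absorbing the anti-holomorphic case into an orientation flip on the target. Everything else follows by direct integration of the equality case in~\eqref{eq:extr-vol-form-ed} for (anti-)holomorphic maps, together with the standard identification of topological and algebraic degree for rational maps of $\bbC_{\infty}$.
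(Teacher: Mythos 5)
Your proposal is correct and follows essentially the same route the paper intends: the paper's proof simply cites these facts as standard complex analysis and deduces part (3) from parts (1) and (2), which is precisely the uniformization--plus--Theorem~\ref{thm:holo} identification with rational maps and the integration of the equality case of \eqref{eq:extr-vol-form-ed} for (anti-)holomorphic maps that you carry out. You have merely filled in the details that the paper leaves implicit, and your handling of the anti-holomorphic case and of the pointwise equality (conformality of holomorphic maps) is accurate.
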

\begin{proof}
Statements~{\it(\ref{part:hm})} and~{\it(\ref{part:zero-deg})} are standard results in complex analysis and ${\it(3)}$ is an easy consequence of ${\it(1)}$ and $\it{(2)}$. 
\end{proof} 
As in Theorem~\ref{t:st}, let $(\calN, \bfh)$ be given as an isometrically embedded surface in $\bbR^{N}$, which we still denote by $\calN$. The following is the key lemma for our proof of Theorem~\ref{t:main}. 
\begin{lem} \label{lem:key}
Let $I=(-1, 1)$. Let $\Phi_{(n)}$ be a sequence of smooth maps $I \times \bbR^{2} \to \calN \subseteq \bbR^{N}$ such that $\deg \, \Phi_{n} \rst_{t} = 0$ for every $t \in I$, and
let $\Psi$ be a smooth map $I \times \bbR^{2} \to \calN \subseteq \bbR^{N}$ such that $\mathrm{deg} \, \Psi \rst_{t} = k \neq 0$ for every $t \in I$. Suppose that we have the convergence
\begin{equation}
	\Phi_{(n)} \to \Psi \quad \hbox{ in } H^{1}_{\mathrm{loc}}(I \times \bbR^{2}; \bbR^{N})
\end{equation}
Then the following lower bound on the time-average of the energy of $\Phi_{(n)}$ holds:
\begin{equation}
	\limsup_{n \to \infty} \int_{-\frac{1}{2}}^{\frac{1}{2}} \calE[\Phi_{(n)}(t), 0] \, \ud t 
	\geq 2 \abs{k} A(\calN).
\end{equation}
\end{lem}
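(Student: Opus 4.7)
The plan is to combine the Bogomolny-type pointwise inequality $\abs{\Phi^*(\om)} \le \tfrac{1}{2} \abs{\nabla \Phi}^2$ underlying~\eqref{bf} with the fiber-wise degree identity, after splitting the spatial integral into a large ball $B_R$ and its complement. The picture that motivates the argument is exactly the one described after Theorem~\ref{t:main}: on $B_R$ the sequence $\Phi_{(n)}$ converges to $\Psi$ and must therefore contribute at least the $\Psi$-degree's worth of energy, $2\abs{k} A(\NN)$; on $\bbR^2 \setminus B_R$, because $\deg \Phi_{(n)} \rst_{t} = 0$ forces the $\Psi$-bubble registered inside $B_R$ to be cancelled outside, $\Phi_{(n)}$ must ``unwrap'' at an additional energy cost of at least $2\abs{k} A(\NN)$.

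\textbf{Key steps.} Fix $\eps > 0$. First I would use the smoothness of $\Psi$ and the hypothesis $\deg \Psi \rst_t = k$ to choose $R = R(\eps)$ so large that
\begin{equation*}
\left| \int_{B_R} \Psi \rst_t^*(\om) \, \ud x - k A(\NN) \right| < \eps \qquad \text{uniformly in } t \in [-\tfrac{1}{2}, \tfrac{1}{2}].
\end{equation*}
Next, since $\Phi^*(\om)$ is an algebraic expression bilinear in $\nabla \Phi$ with coefficients smooth in $\Phi$, and $\NN$ is compact, the assumed $H^1_{\loc}(I \times \bbR^2; \bbR^N)$-convergence transfers to spacetime-integral convergence both for $\Phi_{(n)}^*(\om)$ and for $\abs{\nabla \Phi_{(n)}}^2$ on the slab $[-\tfrac{1}{2}, \tfrac{1}{2}] \times B_R$. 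Applying the pointwise Bogomolny inequality to $\Psi$ on this slab and combining with the above choice of $R$ yields the inner bound
\begin{equation*}
\lim_{n} \int_{-1/2}^{1/2} \! \int_{B_R} \abs{\nabla \Phi_{(n)}}^2 \, \ud x \, \ud t = \int_{-1/2}^{1/2} \! \int_{B_R} \abs{\nabla \Psi}^2 \, \ud x\, \ud t \ge 2 \abs{k} A(\NN) - 2\eps.
\end{equation*}
For the complementary region, the zero-degree hypothesis gives $\int_{\bbR^2 \setminus B_R} \Phi_{(n)} \rst_t^*(\om) \, \ud x = - \int_{B_R} \Phi_{(n)} \rst_t^*(\om) \, \ud x$ for every $t$; the Bogomolny inequality applied to $\Phi_{(n)}$, followed by Jensen's inequality $\int_t \abs{f} \ge \abs{\int_t f}$ in the time variable, then gives
\begin{equation*}
\int_{-1/2}^{1/2} \! \int_{\bbR^2 \setminus B_R} \abs{\nabla \Phi_{(n)}}^2 \, \ud x\, \ud t \ge 2 \left| \int_{-1/2}^{1/2} \! \int_{B_R} \Phi_{(n)}^*(\om) \, \ud x\, \ud t \right|,
\end{equation*}
whose right-hand side converges, by the previous step, to at least $2 \abs{k} A(\NN) - 2\eps$. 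Summing the two contributions, dividing by $2$ to convert $\abs{\nabla \Phi}^2$ to $\E[\Phi, 0]$, and then letting $\eps \to 0$ yields the claim.

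\textbf{Main obstacle.} The chief subtlety is that $H^1_{\loc}$-convergence is inherently a spacetime statement, whereas the degree is a fiber-wise quantity. A direct pointwise-in-$t$ comparison would require that the time-slices $\Phi_{(n)} \rst_t$ themselves converge in $H^1(\bbR^2)$, which the hypotheses do not directly furnish. The Jensen step in the outer-region bound is what bypasses this difficulty: it reduces the entire argument to controlling a single spacetime integral of the bilinear form $\Phi_{(n)}^*(\om)$ on $[-\tfrac{1}{2}, \tfrac{1}{2}] \times B_R$, which is precisely what the $H^1_{\loc}$-convergence delivers, using the compactness of $\NN$ to handle the $\Phi$-dependent coefficients in $\om$.
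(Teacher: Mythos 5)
Your proposal is correct and follows essentially the same route as the paper: the pointwise bound $\abs{\Phi^{\ast}\omg} \le \tfrac12 \abs{\nb_x \Phi}^2$, the splitting into $B_R$ and its complement, the transfer of $\int\int_{B_R}\Phi_{(n)}^{\ast}\omg$ to $\int\int_{B_R}\Psi^{\ast}\omg$ via $H^1_{\loc}$-convergence (the paper handles the $\Phi$-dependent coefficient $\omg\vert_{\Phi_{(n)}}$ by passing to an a.e.\ convergent subsequence and dominated convergence, which is the point you flag), and the degree-zero cancellation forcing an equal energy cost on $\bbR^2\setminus B_R$. The only cosmetic difference is that the paper lower-bounds the inner contribution through $\int_{B_R}\abs{\Phi_{(n)}^{\ast}\omg} \ge \int_{B_R}\Phi_{(n)}^{\ast}\omg$ rather than through convergence of the $H^1$ norms, but both arguments are sound.
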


\begin{proof} 
We begin with a few general formulae in the extrinsic setting. Given a smooth map $\Phi: \bbR^{2} \to \calN \subseteq \bbR^{N}$, the pullback of the volume form $\omg$ of $\calN$ takes the form
\begin{equation} \label{eq:extr-vol-form}
	\Phi^{\ast}\omg(x) = \omg \vert_{\Phi(x)} (\rd_{1} \Phi \wedge \rd_{2} \Phi)(x) \, \ud x^{1} \wedge \ud x^{2},
\end{equation}
where $\omg \vert_{\Phi(x)}$ is the volume form of $\calN$ at $\Phi(x)$. By the embedding $\calN \subseteq \bbR^{N}$, we may view $\omg$ as a $\wedge^{2} \bbR^{N}$-valued function on $\calN$ such that $\abs{\omg}_{\wedge^{2}\bbR^{N}} = 1$. The degree of the map $\Phi$ is given by the integral
\begin{equation} \label{eq:extr-deg}
	\deg (\Phi) = \frac{1}{A(\calN)} \int_{\bbR^{2}} \omg \vert_{\Phi(x)} (\rd_{1} \Phi \wedge \rd_{2} \Phi)(x) \, \ud x^{1} \wedge \ud x^{2}.
\end{equation}
Finally, we observe that we have the pointwise bound
\begin{equation} \label{eq:extr-vol-form-ed}
	\abs{\omg \vert_{\Phi(x)} (\rd_{1} \Phi \wedge \rd_{2} \Phi)(x)} \leq \frac{1}{2} \bb( \brk{\rd_{1} \Phi, \rd_{1} \Phi}(x) + \brk{\rd_{2} \Phi, \rd_{2} \Phi} (x) \bb),
\end{equation}
which is an immediate consequence of $\abs{\omg}_{\wedge^{2}\bbR^{N}} = 1$. 

We now begin the proof in earnest. Assume that $\deg (\Psi) > 0$; the case $\deg (\Psi) < 0$ can be dealt with similarly. Fix any $\eps > 0$, $R > 0$ and let $I_{0} = [-1/2, 1/2]$. For simplicity, we shall abuse the notation slightly and write $\Phi^{\ast}\omg(x) = \omg \vert_{\Phi(x)} (\rd_{1} \Phi \wedge \rd_{2} \Phi)(x)$ in view of \eqref{eq:extr-vol-form}. We claim that up to passing to a subsequence we have
\begin{equation} \label{eq:key-claim}
	\int_{I_{0}} \int_{B_{R}} \abs{\Phi_{(n)}^{\ast} \, \omg - \Psi^{\ast} \, \omg} \, \ud x^{1} \ud x^{2} \, \ud t < \frac{\eps}{4} \quad \hbox{ for sufficiently large } n.
\end{equation}
Indeed, by hypothesis, we have the strong convergence
\begin{equation} \label{eq:strongH1}
 	\Phi_{(n)} \to \Psi \quad \hbox{ in } H^{1}(I_{0} \times B_{R}; \bbR^{N})
\end{equation}
as $n \to \infty$. Using the formula \eqref{eq:extr-vol-form}, we split the integrand in \eqref{eq:key-claim} into two as follows:
\begin{align}
\Phi_{(n)}^{\ast} \, \omg - \Psi^{\ast} \, \omg
=&	\omg \vert_{\Phi_{(n)}} \bb( \rd_{1} \Phi_{(n)} \wedge \rd_{2} \Phi_{(n)} - \rd_{1} \Psi \wedge \rd_{2} \Psi \bb)		\label{eq:key-term1} \\
& + (\omg \vert_{\Phi_{(n)}} - \omg \vert_{\Psi}) (\rd_{1} \Psi \wedge \rd_{2} \Psi). \label{eq:key-term2}
\end{align}
The contribution of \eqref{eq:key-term1} to the left-hand side of \eqref{eq:key-claim} goes to $0$ as $n \to \infty$ by \eqref{eq:strongH1} and the fact that $\abs{\omg}_{\wedge^{2} \bbR^{N}} = 1$.
For the contribution of \eqref{eq:key-term2}, we first observe the following: By the $L^{2}(I_{0} \times B_{R})$ convergence $\Phi_{(n)} \to \Psi$, there exists a subsequence (still denoted by $\Phi_{(n)}$) such that
\begin{equation*}
	\Phi_{(n)} \to \Psi \quad \hbox{ almost everywhere on } I_{0} \times B_{R}.
\end{equation*}
Therefore $\omg \vert_{\Phi_{(n)}} \to \omg \vert_{\Psi}$ almost everywhere on $I_{0} \times B_{R}$ as well.
Since $\rd_{1} \Psi \wedge \rd_{2} \Psi$ is a fixed integrable function on $I_{0} \times B_{R}$, it follows from the dominated convergence theorem that 
\begin{equation*}
	\int_{I_{0}} \int_{B_{R}} \abs{(\omg \vert_{\Phi_{(n)}} - \omg \vert_{\Psi}) (\rd_{1} \Psi \wedge \rd_{2} \Psi)} \, \ud x^{1} \ud x^{2} \ud t \to 0 \quad \hbox{ as } n \to \infty,
\end{equation*}
which proves the claim.

From \eqref{eq:key-claim}, up to passing to a subsequence and for sufficiently large $n$, we obtain the one-sided bound
\begin{equation*}
	\int_{I_{0}} \bb( \int_{B_{R}} \Phi_{(n)}^{\ast} \, \omg\bb) \, \ud t \geq \int_{I_{0}} \bb( \int_{B_{R}} \Psi^{\ast} \, \omg \bb) \, \ud t - \frac{\eps}{4}.
\end{equation*}
Since the degree of the map $\Phi_{(n)}$ restricted to every constant $t$-hypersurface is always zero, we also have
\begin{equation*}
	- \int_{I_{0}} \bb( \int_{\bbR^{2} \setminus B_{R}} \Phi_{(n)}^{\ast} \, \omg\bb) \, \ud t 
	= \int_{I_{0}} \bb( \int_{B_{R}} \Phi_{(n)}^{\ast} \, \omg\bb) \, \ud t \geq \int_{I_{0}} \bb( \int_{B_{R}} \Psi^{\ast} \, \omg \bb) \, \ud t - \frac{\eps}{4}.
\end{equation*}
Using the pointwise bound \eqref{eq:extr-vol-form-ed}, we then arrive at the lower bound
\begin{align*}
	\int_{I_{0}} \calE[\Phi_{(n)}, 0] \, \ud t
	= &\int_{I_{0}} \int_{\bbR^{2}} \frac{1}{2} \bb( \brk{\rd_{1} \Phi, \rd_{1} \Phi} + \brk{\rd_{2} \Phi, \rd_{2} \Phi}  \bb) \, \ud x^{1} \ud x^{2} \, \ud t \\
	\geq & \int_{I_{0}} \bb( \int_{B_{R}} \abs{\Phi^{\ast}_{(n)} \omg} \, \ud x^{1} \ud x^{2} 
						+ \int_{\bbR^{2} \setminus B_{R}} \abs{\Phi^{\ast}_{(n)} \omg} \, \ud x^{1} \ud x^{2} \bb) \, \ud t \\
	\geq & 2 \int_{I_{0}} \bb( \int_{B_{R}} \Psi^{\ast} \, \omg \bb) \, \ud t
			 - \frac{\eps}{2}.
\end{align*}
Choosing $R$ sufficiently large, we may ensure that
\begin{equation*}
\int_{B_{R}} \Psi^{\ast} \, \omg 
\geq \int_{\bbR^{2}} \Psi^{\ast} \, \omg  - \frac{\eps}{4}
= A(\calN) \deg(\Psi) - \frac{\eps}{4}.
\end{equation*}
for every $t \in I_{0}$. As $\eps > 0$ is arbitrary, the conclusion of the lemma follows. \qedhere
\end{proof}

With Corollaries~\ref{cor:high-genus}, \ref{cor:zero-genus} and Lemma~\ref{lem:key} in hand, we are ready to prove Theorem~\ref{t:main}.
\begin{proof} [Proof of Theorem~\ref{t:main}]
Suppose that Theorem~\ref{t:main} fails. Then by Theorem~\ref{t:st}, we obtain a sequence $\Phi_{(n)} : (-1, 1) \times \bbR^{2} \to \calN$ of degree zero smooth wave maps converging in $H^{1}_{\mathrm{loc}}$ to $\Psi : (-1, 1) \times \bbR^{2} \to \calN$, which is a Lorentz transform of a time-independent harmonic map $\Psi_0: \bbR^{2} \to \calN$. %We note that if $\deg(\Psi_0)
%By Corollary~\ref{cor:high-genus}, it suffices to consider the case when the genus of $\calN$ is zero, i.e., $\calN$ is homeomorphic to $\bbS^{2}$. 
By Corollary~\ref{cor:zero-genus}{\it (\ref{part:zero-deg})} we may assume that the degree of $\Psi_0 $ is $k \neq 0$, and it follows  that $\deg(\Psi)\rst_{t}  = k$ for any time $t \in I$. Then applying Lemma~\ref{lem:key} and the fact that the energy of $\Phi_{(n)}$ is independent of $n \in \N$ and $t \in I$,  we deduce that 
\EQ{
\limsup_{n \to \infty}  \E[\Phi_{(n)}, 0] \ge 2 \abs{k} A(\NN) = 2 \abs{k} \E[Q, 0]
} 
where we have used Corollary~\ref{cor:zero-genus}{\it (\ref{part:high-deg})} to relate $A(\calN)$ with $\calE[Q, 0]$. This contradicts~\eqref{<2EQ} and thus the theorem is proved. \qedhere
\end{proof}

\section{Threshold conjecture for Yang-Mills} \label{sec:ym}
%Given our main theorem, it is natural to ask whether there is a refined threshold for the Yang-Mills equations on $\bbR^{1+4}$, which is another example of an energy critical geometric semi-linear wave equation. 
%In this section we give a brief discussion of the case of the Yang-Mills equations on $\bbR^{1+4}$. 

Consider the Lie group $\mathrm{SU}(2)$ of $2 \times 2$ special unitary matrices, with the associated Lie algebra $\mathrm{su}(2)$ consisting of $2 \times 2$ trace-free anti-hermitian matrices. Let $\eta$ be a $\mathrm{SU}(2)$ vector bundle over $\bbR^{1+4}$. The Yang-Mills equations for a connection $A$ on $\eta$ is the Euler-Lagrange equation for the action
\begin{equation} \label{eq:YM-S}
	\calL_{\mathrm{YM}}(A) := \frac{1}{8} \int_{\bbR^{1+4}} \brk{F_{\mu \nu}[A], F^{\mu \nu}[A]} \, \ud t \ud x.
\end{equation}
where $F[A]$ is the $\mathrm{su}(2)$-valued curvature 2-form associated to $A$ and $\brk{F, G} := \mathrm{tr} \, (F G^{\dagger})$. 
The conserved energy takes the form
\begin{equation} \label{eq:YM-E}
	\calE[A](t) := \frac{1}{8} \int_{\bbR^{4}} \sum_{\mu, \nu \in \set{0, 1,2,3,4}} \brk{F_{\mu \nu}[A(t)], F_{\mu \nu}[A(t)]} \, \ud x.
\end{equation}
Restricting to connections which are independent of $t$, the Yang-Mills action coincides with the energy, and the resulting Euler-Lagrange equation give rise to an elliptic PDE. This system has played an important role in the study of smooth $4$-manifolds via the Donaldson theory \cite{DonaldsonKronheimer}, and in the literature it is also often referred to as the Yang-Mills equation. To distinguish it from the time-dependent Yang-Mills equation on $\bbR^{1+4}$, we shall refer to this elliptic PDE as the \emph{elliptic Yang-Mills equation}, and to its solutions as \emph{elliptic Yang-Mills connections}.

The Yang-Mills connections on $\bbR^{1+4}$ exhibit many similarities with the wave maps from $\bbR^{1+2}$.  Similar to the concept of degree of a map, associated to each $\mathrm{su}(2)$ vector bundle $\eta$ over $\bbR^{4}$ is its \emph{second Chern number} (often also called the \emph{topological charge}) defined as
\begin{equation} \label{eq:YM-N}
	N := c_{2} (\eta)[\bbR^{4}] = \frac{1}{8 \pi^{2}} \int_{\bbR^{4}} \tr (F[A] \wedge F[A]),
\end{equation}
where $A$ is any connection on $\eta$. In what follows, we shall also refer to $N$ as the second Chern number of a connection $A$. As the notation suggests, the number $N$ is given by integrating over $\bbR^{4}$ a $4$-form
\begin{equation} \label{eq:c2}
	c_{2} (\eta) = \frac{1}{8 \pi^{2}} \tr (F[A] \wedge F[A]),
\end{equation}
which is called the \emph{second Chern class} of $\eta$. 

Minimizers of the (time-independent) Yang-Mills energy on $\bbR^{4}$ in the class of connections with a given second Chern number $N$ give rise to special elliptic Yang-Mills connections, namely \emph{self-dual} or \emph{anti-self-dual} connections. In particular, the anti-self-dual connections on a $\mathrm{SU}(2)$ bundle with $N = 1$ are often referred to as the \emph{first instantons}. The energy of a self- or anti-self-dual connection equals $\pi^{2} \abs{N}$. These connections may be considered as analogs of holomorphic/anti-holomorphic maps in dimension two. We refer to \cite[Chapter 10]{MS} for more detailed introduction to these concepts. 
 
Although the analog of Theorem~\ref{t:st} has not yet been established for the Yang-Mills equation, it is natural to ask whether a refined threshold statement like Theorem~\ref{t:main} also holds for the Yang-Mills equation. For the purpose of discussion, we shall assume that the following conjecture holds:
\begin{conj}[Bubbling for Yang-Mills] \label{conj:YM}
Suppose that $A(t)$ is a smooth finite energy solution to the Yang-Mills equation with maximal forward time of existence $T_{+}$. Then either $T_{+} = \infty$ or $A(t)$ bubbles a nontrivial elliptic Yang-Mills connection at $T_{+}$ in the following sense:
\begin{itemize}
\item There exists a sequence of times $t_{n} \to T_{+}$, a sequence of scales $\lmb_{n} = o(T_{+} - t_{n})$, and a sequence of translations $x_{n} \in \bbR^{4}$ such that
\begin{equation*}
	A_{(n)}(t,x) := \lmb_{n} A(t_{n} + \lmb_{n} t, x_{n} + \lmb_{n} x)
\end{equation*}
converges to a Lorentz transform of a nontrivial finite energy elliptic Yang-Mills connection $B$ in the following sense:
\begin{equation*}
	F[A_{(n)}] \to F[B] \hbox{ as } n \to \infty \hbox{ in } L^{2}_{\mathrm{loc}}((-1, 1) \times \bbR^{4}).
\end{equation*}
\end{itemize}
\end{conj}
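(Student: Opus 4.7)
The plan is to mirror the Struwe--Sterbenz--Tataru strategy for wave maps, adapted to the gauge-theoretic setting of Yang--Mills on $\bbR^{1+4}$. Assume $T_+ < \infty$; I would argue by contrapositive, aiming to show that if no bubble forms at $T_+$, then the local energy can be made arbitrarily small on small spacetime scales near any candidate singular point, at which stage a small-energy global regularity and scattering theorem for Yang--Mills in $4+1$ dimensions (available in certain gauges through the work of Krieger--Sterbenz--Tataru and others) extends $A(t)$ smoothly past $T_+$, contradicting the definition of $T_+$. The first technical step is to select a workable gauge for the nonlinear analysis. The Coulomb gauge is the natural one for the elliptic limit but is borderline in four space dimensions; a caloric gauge (of the second author) or a local gauge patched together via Uhlenbeck's lemma is more realistic in the evolutionary regime, with all final statements recast in gauge-invariant terms via $F[A]$.

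Next I would exploit a Morawetz-type monotonicity formula on the truncated backward light cone from a candidate singular vertex $(T_+, x_0)$, the higher-dimensional Yang--Mills analog of Grillakis' estimate for critical wave equations. This should yield that the flux of energy through the lateral boundary of the cone tends to zero as $t \nearrow T_+$, so that the only way singularity can form is by concentration at the vertex. Choose times $t_n \nearrow T_+$, scales $\lmb_n = o(T_+ - t_n)$, and spatial centers $x_n \to x_0$ such that the rescaled connections
\begin{equation*}
A_{(n)}(t,x) := \lmb_n A(t_n + \lmb_n t, x_n + \lmb_n x)
\end{equation*}
carry a definite amount of energy on the unit spacetime cube. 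By scale invariance of the energy, $F[A_{(n)}]$ is uniformly bounded in $L^\infty_t L^2_x$; after local gauge changes produced by Uhlenbeck's lemma on small balls, a subsequence of $F[A_{(n)}]$ converges weakly in $L^2_{\loc}((-1,1)\times \bbR^4)$ to some $F[B]$. The vanishing of the flux at the vertex, together with $\lmb_n = o(T_+ - t_n)$, should force the time derivative of $A_{(n)}$ to vanish in the limit up to a boost, so that $B$ is a Lorentz transform of a stationary connection satisfying the elliptic Yang--Mills equation. Upgrading weak to strong $L^2_{\loc}$ convergence of $F[A_{(n)}]$ to $F[B]$ would follow by revisiting the Morawetz argument to exclude energy loss, in close analogy with Struwe's bubbling proof for harmonic map heat flow. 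Non-triviality of $B$ is secured by the concentration guarantee together with a small-energy non-concentration statement.

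The principal obstacle is gauge invariance in four spatial dimensions. Unlike the wave maps case, where $\Phi_{(n)}$ converges as maps into a fixed ambient Euclidean target, Yang--Mills connections are only defined up to gauge, so each $A_{(n)}$ must be renormalized by an $n$-dependent gauge transformation before convergence is even meaningful. In four dimensions the Coulomb-type gauge is scale-critical and known to develop singularities when the local curvature is not small, forcing a patching scheme via Uhlenbeck-type lemmas; global gauges are generally not available. A further delicate point is correctly accommodating a nonzero Lorentz boost of the limiting bubble: translating the sharp backward-cone Morawetz estimates that Sterbenz--Tataru use to reduce a wave-map bubble to a Lorentz transform of a stationary harmonic map into the Yang--Mills setting, and coupling them with the gauge-patching, is in my view the deepest step of the would-be proof.
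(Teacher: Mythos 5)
The statement you are trying to prove is not proved in the paper at all: it is stated as Conjecture~\ref{conj:YM} precisely because, as the authors say, ``the analog of Theorem~\ref{t:st} has not yet been established for the Yang-Mills equation.'' The paper only \emph{assumes} this conjecture in order to deduce Proposition~\ref{prop:YM-key}. So there is no proof in the paper to compare yours against, and the honest assessment is that you have written a research program, not a proof, for what was an open problem at the time of writing.

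Your outline is a reasonable transcription of the Struwe/Sterbenz--Tataru strategy, and you correctly identify the genuine obstructions, but each of the load-bearing steps is asserted rather than established. Concretely: (i) the small-energy (more precisely, energy-dispersed) global regularity input for $(4+1)$-dimensional Yang--Mills in a workable gauge is itself a major theorem, not an off-the-shelf tool; (ii) the Morawetz/flux-decay argument on truncated backward cones must be carried out for a gauge-covariant system where the natural monotone quantities are built from $F[A]$ and the stress-energy tensor, and its interaction with Uhlenbeck-style local gauge patching is exactly where the known wave-maps proof does not transfer --- you name this as ``the deepest step'' but offer no argument for it; (iii) upgrading weak to strong $L^{2}_{\loc}$ convergence of the curvatures, ruling out concentration of energy along null directions (so that the limit is a Lorentz transform of a \emph{finite-energy} elliptic connection rather than trivial or degenerate), and verifying nontriviality all require quantitative compactness statements you have not supplied. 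In short, the proposal contains no step that could be checked as complete; it is a plausible roadmap for the conjecture (close in spirit to the strategy later carried out in the literature on the Yang--Mills threshold problem), but as a proof it has gaps at every essential stage.
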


\begin{rem}
When $T_{+} = \infty$, it is expected that an analog of the second part of Theorem~\ref{t:st} (i.e., the dichotomy of bubbling and scattering) holds as well. We however restrict our attention to the question of global regularity (i.e., whether $T_{+} = \infty$ or not) for simplicity.
\end{rem}

Our proof of Lemma~\ref{lem:key} in Section~\ref{sec:wm} can be easily adapted to the case of the Yang-Mills equation, where the concept of degree is replaced by the second Chern number $N$ and the pullback volume form $\Phi^{\ast} \omg$ by the second Chern class $c_{2}(\eta)$. Combined with Conjecture~\ref{conj:YM}, we obtain the following proposition.

\begin{prop} \label{prop:YM-key}
Suppose that Conjecture~\ref{conj:YM} is true. Let $A(t)$ be a smooth finite energy Yang-Mills connection with vanishing second Chern number. Suppose that the maximal forward time of existence of $A$ is finite, i.e., $T_{+} < \infty$, and let $B$ be the Lorentz transformed elliptic Yang-Mills connection given by Conjecture~\ref{conj:YM}. If the second Chern number $N$ of $B$ is nonzero, then we have the lower bound
\begin{equation*}
	\calE[A] \geq 2 \pi^{2} \abs{N}.
\end{equation*}
\end{prop}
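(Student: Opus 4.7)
The plan is to mirror the proof of Theorem~\ref{t:main}: invoke Conjecture~\ref{conj:YM} to obtain a bubbling sequence, then establish a Yang-Mills analogue of Lemma~\ref{lem:key}, in which the pullback area form $\Phi^{*}\omg$ is replaced by the second Chern class $4$-form $c_{2}(\eta) = \frac{1}{8\pi^{2}} \tr(F \wedge F)$, the strong $H^{1}_{\mathrm{loc}}$ convergence of maps is replaced by strong $L^{2}_{\mathrm{loc}}$ convergence of curvatures, and the pointwise inequality~\eqref{eq:extr-vol-form-ed} is replaced by the classical \emph{Bogomolny-type pointwise bound}
\begin{equation*}
|c_{2}(A)|(x) \;\leq\; \frac{1}{\pi^{2}} \cdot (\text{energy density of } A \text{ at } x),
\end{equation*}
where $|c_{2}(A)|(x)$ denotes the absolute value of the coefficient of $\ud x^{1} \wedge \cdots \wedge \ud x^{4}$. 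This bound, which follows from Cauchy-Schwarz applied to the identity $\tr(F \wedge F) = \tr(F \wedge \ast(\ast F))$, integrates on each constant-$t$ slice to the basic BPS lower bound $\calE[A] \geq \pi^{2} |N|$, and is the infinitesimal manifestation of it (cf.\ \cite[Chapter~10]{MS}).

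Using Conjecture~\ref{conj:YM}, extract the rescaled sequence $A_{(n)}(t,x) = \lmb_{n} A(t_{n} + \lmb_{n} t, x_{n} + \lmb_{n} x)$ on $(-1,1) \times \bbR^{4}$ with $F[A_{(n)}] \to F[B]$ in $L^{2}_{\mathrm{loc}}$, where $B$ is a Lorentz transform of an elliptic Yang-Mills connection with second Chern number $N \neq 0$. Note that (i) by conformal invariance of the Yang-Mills energy, $\calE[A_{(n)}(t)] = \calE[A]$ for all $n, t$, and (ii) the second Chern number of the spatial bundle on each slice $\{t\} \times \bbR^{4}$ is conserved by the smooth Yang-Mills evolution and is invariant under the hyperbolic rescaling, so it vanishes for every $A_{(n)}(t)$. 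Similarly, every slice of $B$ carries second Chern number $N$, by continuity of $F[B]$ in $t$ together with the integrality of $c_{2}$.

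Now fix $I_{0} = [-1/2, 1/2]$, $R > 0$, and $\eps > 0$. Since $c_{2}$ is bilinear in $F$, the dominated convergence argument of Lemma~\ref{lem:key} transfers mutatis mutandis to yield
\begin{equation*}
\int_{I_{0}} \int_{B_{R}} c_{2}(A_{(n)}) \, \ud x \, \ud t \;\longrightarrow\; \int_{I_{0}} \int_{B_{R}} c_{2}(B) \, \ud x \, \ud t \qquad \text{as } n \to \infty,
\end{equation*}
and for $R$ sufficiently large the right-hand side is within $\eps$ of $|I_{0}| N$. Since every slice of $A_{(n)}$ has vanishing Chern number,
\begin{equation*}
\int_{B_{R}} c_{2}(A_{(n)}(t)) \, \ud x \;=\; - \int_{\bbR^{4} \setminus B_{R}} c_{2}(A_{(n)}(t)) \, \ud x.
\end{equation*}
Applying the pointwise Bogomolny bound on the two regions separately and summing,
\begin{equation*}
|I_{0}| \calE[A_{(n)}] \;=\; \int_{I_{0}} \calE[A_{(n)}(t)] \, \ud t \;\geq\; \pi^{2} \int_{I_{0}} \int_{\bbR^{4}} |c_{2}(A_{(n)})| \, \ud x \, \ud t \;\geq\; 2 \pi^{2} \left| \int_{I_{0}} \int_{B_{R}} c_{2}(A_{(n)}) \, \ud x \, \ud t \right|,
\end{equation*}
and passing to the limits $n \to \infty$, then $R \to \infty$, $\eps \to 0$ gives $\calE[A] \geq 2 \pi^{2} |N|$, as desired.

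The main obstacle I anticipate is the rigorous bookkeeping of the topological invariant across the rescaling and limiting procedure: unlike the elementary degree of a wave map, the second Chern number here is an invariant of the spatial slices $\{t\} \times \bbR^{4}$, and one must carefully verify (a) its conservation by the smooth Yang-Mills evolution, (b) its invariance under the hyperbolic rescaling $(t,x) \mapsto (t_{n} + \lmb_{n} t, x_{n} + \lmb_{n} x)$, and (c) the assertion that all slices of the Lorentz-transformed elliptic bubble $B$ carry Chern number equal to $N$. Each of these steps follows from standard considerations, but requires some care because the underlying space $\bbR^{4}$ is non-compact. A secondary concern is pinning down the precise normalization constant in the pointwise Bogomolny bound to ensure the appearance of the factor $2\pi^{2}$, rather than a different multiple of $\pi^{2}$, in the final estimate.
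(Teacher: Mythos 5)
Your proposal is correct and follows exactly the route the paper intends: the paper's "proof" of this proposition is the single remark that Lemma~\ref{lem:key} adapts with $\deg$ replaced by $N$, $\Phi^{*}\omg$ by $c_{2}(\eta)$, and the pointwise bound \eqref{eq:extr-vol-form-ed} by the Cauchy--Schwarz/Bogomolny inequality, which is precisely the adaptation you carry out (your version is in fact slightly simpler than the wave-map case, since $c_{2}$ is purely quadratic in $F$ and the $L^{2}_{\mathrm{loc}}$ curvature convergence makes the a.e./dominated-convergence step unnecessary). The bookkeeping issues you flag (conservation and scale-invariance of the slice-wise Chern number, the normalization giving $\pi^{2}\abs{N}$ as the instanton energy) are real but are likewise taken for granted in the paper.
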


In the case of wave maps, the full refined threshold theorem (Theorem~\ref{t:main}) then follows from the observation that there does not exist any nontrivial harmonic maps with degree $0$ (see Corollary~\ref{cor:zero-genus}). However, the case of Yang-Mills is different: It is known that there exist nontrivial solutions to the time-independent Yang-Mills equations with zero second Chern number which are not self- or anti-self-dual. We refer to \cite{SSU}; see also \cite[Section~11.6]{MS} and references therein for further discussion. 

In conclusion, we have shown that if Conjecture~\ref{conj:YM} holds, then the bubbling of an elliptic Yang-Mills connection with nonzero second Chern number requires at least twice the energy of the first instanton, in analogy to the case of wave maps. The optimal energy threshold for global regularity of Yang-Mills connections with $N = 0$ therefore seems to hinge on proving a lower bound for the energy of non-minimal solutions to the elliptic Yang-Mills equations with vanishing second Chern number.\footnote{There is a question of whether it is possible to bubble a non-minimal elliptic Yang-Mills connection dynamically from a smooth data. A priori there is no reason to preclude this scenario. In fact, in the model case of the energy critical NLW $\Box u = u^5$ on $\bbR^{1+3}$, an explicit construction of a blow-up by a non-minimal static solution is known \cite{KST3}.} % or showing that such blow-ups cannot occur by some other means.

\bibliographystyle{plain}
\bibliography{researchbib}

\bigskip

\centerline{\scshape Andrew Lawrie, Sung-Jin Oh}
\smallskip
{\footnotesize
% please put the address of the first author
 \centerline{Department of Mathematics, The University of California, Berkeley}
\centerline{970 Evans Hall \#3840, Berkeley, CA 94720, U.S.A.}
\centerline{\email{ alawrie@math.berkeley.edu, sjoh@math.berkeley.edu}}
} 

\end{document}